\renewcommand{\d}{\partial}
\def\tildeu#1{\underaccent{\sim}{#1}}
\def\d{\Omega}
\def\Forall{\quad \hbox{ for all }}
\def\M{{\mathcal{M}}}
\newcommand{\tn}[1]{\lVert\kern-1pt\lvert{#1}\rvert\kern-1pt\rVert}
\def\<{{\langle}}
\def\>{{\rangle}}
\def\Forall{\quad \hbox{ for all }}
\def\d{\Omega}
\def\Forall{\quad \hbox{ for all }}
\def\d{\Omega}
\def\Forall{\quad \hbox{ for all }}
\def\tb#1{{\|\kern-1pt| #1 \|\kern-1pt|}}
\def\nm2#1#2{\|#1\|_{2,\d_{#2}}}
\def\R{\mathbb{R}}
 \theoremstyle{plain}
 \newtheorem{thm}{Theorem}[section]
 \numberwithin{equation}{section} 
 \numberwithin{figure}{section} 
 \theoremstyle{plain}
 \theoremstyle{plain}
 \theoremstyle{plain}
 \newtheorem{theorem}[thm]{Theorem}
 \theoremstyle{plain}
 \newtheorem{corollary}[thm]{Corollary}
\theoremstyle{plain}
 \newtheorem{remark}[thm]{Remark}
 \theoremstyle{plain}
 \newtheorem{lemma}[thm]{Lemma}
\def\M{{\mathcal{M}}}
\def\d{{\Omega}}
\def\Forall{\quad \hbox{ for all }}
\def\<{{\langle}}
\def\>{{\rangle}}
\def\R{\mathbb{R}}
\begin{document}

\title[Optimal convergence of  upwinding Petrov Galerkin]
{On convergence of  upwinding Petrov-Galerkin methods for  convection-diffusion}

\author{Constantin Bacuta}
\address{University of Delaware,
Mathematical Sciences,
501 Ewing Hall, 19716}
\email{bacuta@udel.edu}


\keywords{Mixed finite element, Upwinding Petrov-Galerkin, bubble upwinding, convection dominated problem, singularly perturbed problems}

\subjclass[2010]{65F, 65H10, 65N06, 65N12, 65N22, 65N30, 74S05, 76R10, 76R50}


\begin{abstract} We consider special  upwinding Petrov-Galerkin discretizations  for  convection-diffusion problems.  For the one dimensional case with  a standard continuous linear element  as  the trial space and a special exponential bubble test space,  we prove  that the Green function associated to the continuous solution can  generate the  test space.  In this case, we find  a formula for the exact inverse of the  discretization matrix, that  is used  for establishing  new error estimates for other  bubble upwinding Petrov-Galerkin discretizations.  We introduce a quadratic bubble upwinding  method with a special  scaling parameter  that  provides optimal approximation order  for the solution in the discrete infinity norm. 
Provided the  linear interpolant has standard  approximation properties, we prove optimal approximation estimates  in $L^2$ and $H^1$ norms. The  quadratic bubble method is  extended to a two dimensional convection diffusion problem.  The proposed discretization produces optimal  $L^2$ and $H^1$ convergence  orders on subdomains that avoid the boundary layers.  

The tensor idea of using an efficient upwinding Petrov-Galerkin discretization along each stream line direction in combination  with a standard discretizations for  the orthogonal  direction(s) can  lead to new and efficient discretization methods for multidimensional convection dominated models.

\end{abstract}
\maketitle

\section{Introduction}


We consider   the singularly perturbed  convection-diffusion problem: Find $u$ defined on $\Omega$ such that 
\begin{equation}\label{PDE_RD}
   \left\{
\begin{array}{rcl}
     -\varepsilon\, \Delta u +\textbf{b}\cdot \nabla u =\ f & \mbox{in} \ \ \ \Omega,\\
      u =\ 0 & \mbox{on} \ \partial\Omega,\\ 
\end{array} 
\right. 
\end{equation}
for $\varepsilon>0$, $\textbf{b}=(b_1, b_2)^T \neq 0$,  and $b_1\geq 0, b_2\geq  0$ on $\Omega=(0, 1)\times (0,1) \subset\R^2$. 
We focus on the convection dominated case, i.e.,  $\varepsilon \ll 1$ and  $f \in L^2(\Omega)$.

The one dimensional version of \eqref{PDE_RD} with $\textbf{b}=1$  is: 
Find $u=u(x)$  on $[0, 1]$ such that
\begin{equation}\label{eq:1d-model}
\begin{cases}-\varepsilon\,  u''(x)+u'(x)=f(x),& 0<x<1\\ u(0)=0, \ u(1)=0. \end{cases}
\end{equation} 

The PDE models \eqref{PDE_RD},  \eqref{eq:1d-model}, and their  various multi-dimensional  extensions   arise in solving various  practical problems such as  heat transfer problems in thin domains, as well as when using small step sizes in implicit time discretizations of parabolic reaction diffusion type problems, see e.g., \cite{Lin-Stynes12} and the references in  \cite{roos-stynes-tobiska-96}. The  discretization of these types of  problems  poses  numerical challenges due to the $\varepsilon$-dependence of resulting linear systems and of  the  stability constants. In addition, the solutions to these problems are characterized by  boundary layers, and are difficult to approximate using a standard finite element space, see e.g., 
\cite{Dahmen-Welper-Cohen12, EEHJ96,  linssT10, Roos-Schopf15, zienkiewicz2014}. 
There is a tremendous amount of literature addressing  these types of problems, see e.g.,  \cite{EEHJ96, linssT10, quarteroni-sacco-saleri07, Roos-Schopf15, zienkiewicz2014}. Our approach on building the test spaces and on establishing  error analysis is different, as we seek test spaces that lead to optimal discrete infinity error  first, and then analyze  standard norm errors. The proposed strategy leads to  more efficient discretizations  for convection dominated problems. 

In this paper, we discuss  a general upwinding Finite Element (FE) Petrov-Galerkin (PG) method, called Upwinding Petrov-Galerkin (UPG)  discretization based on  bubble modification of the test space. This approach works for both 1D and 2D cases. While the trial space is a standard continuous $P^1$- FE space, the test space is obtained by modifying each basis function of the trial space using two bubble functions with  special average values, and alternating signs to match  the convection direction. 
For  many classical discretization methods for  convection diffusion problems, including  the streamline diffusion (SD) method, the error analysis is done using an $\varepsilon$-weighted norm, and not studied in standard  $L^2$ or  $H^1$ norms. 
In the proposed approach, we  design our discretization such that  the discrete solution is close to the interpolant of the exact solution. Such discretization  avoids non-physical oscillations  and allows for optimal   $L^2$ and $H^1$ convergence  estimates. 

The goal of the paper is to present  new techniques and ideas for robust  finite element discretization  and analysis of convection dominated problems based on special bubble UPG approach. The  ideas and techniques presented here, can be extended  to the multidimensional case for  other  convection dominated problems. 

The rest of the paper is organized as follows.  In Section \ref{sec:FE}, we first  review a general upwinding Petrov-Galerkin discretization method, and  define particular test spaces based on quadratic bubbles and exponential type bubbles. In Section  \ref{sec:Exact-Inv4EB},  we establish an exact inverse of the exponential bubble UPG discretization matrix. 
  We  analyze the convergence  of  a special  quadratic bubble UPG  approximation in Section \ref{sec:B2}. We prove optimal   $L^2$ and $H^1$ convergence  estimates for the discrete solution in Section \ref{sec:B2norms}. In Section \ref{sec:2DB2}, we extend the special  quadratic bubble UPG  approximation to the two dimensional case. 
  We present numerical results in Section \ref{sec:2DB2NR},  and summarize our findings  in Section \ref{sec:Conc}.

\section{Finite element  linear variational formulation}\label{sec:FE}
For the finite element discretization of \eqref{eq:1d-model}, we use the following  notation:
\[ 
\begin{aligned}
a_0(u, v) & = \int_0^1 u'(x) v'(x) \, dx, \ (f, v) = \int_0^1  f(x) v(x) \, dx,\ \text{and}\\
b(v, u)& =\varepsilon\, a_0(u, v)+(u',v)  \ \text{for all} \ u,v \in V=Q=H^{1}_0(0,1).
\end{aligned}
 \]
A variational formulation of \eqref{eq:1d-model}  is:  Find $u \in Q:= H_0^1(0,1)$ such that
 \begin{equation}\label{VF1d}
b(v,u) = (f, v), \ \text{for all} \ v \in V=H^{1}_0(0,1).
\end{equation}
The existence and uniqueness of the solution of \eqref{VF1d} is well known, see e.g., \cite{BQ15, BQ17, bartels16, boffi-brezzi-fortin, braess, brenner-scott, demko23, ern-guermond, demko-oden}.

\subsection{The Petrov-Galerkin method  with bubble type test space } \label{sec:PG}
 Various  Petrov-Galerkin discretizations  for solving \eqref {VF1d} were considered in  \cite{CRD-results, zienkiewicz76, mitchell-griffiths, roos-stynes-tobiska-96, zienkiewicz2014} and other related papers. According to Section 2.2.2 in \cite{zienkiewicz2014}, the idea  of upwinding with  polynomial bubble functions was first suggested in \cite{ZGHupwind76} and used with quadratic bubble functions modification in the same year in \cite{zienkiewicz76}.    

 In this section, following \cite{connections4CD, comparison4CD}, we review  a general class of upwinding PG discretizations based on a bubble modification of the  standard $C^0-P^1$ test space $\M_h$.  
The idea is to define the test space  $V_h$  by adding  a pair of  bubble functions to each basis function  
$\varphi_j \in \M_h$ in order to match the convection direction.

For defining the general bubble UPG discretization of  \eqref{eq:1d-model}, we  start by  dividing the interval $[0,1]$ into $n$  equal length subintervals using the nodes $0=x_0<x_1<\cdots < x_n=1$, and denote  $h:=x_j - x_{j-1}=1/ n$. We define  the corresponding finite element discrete space  $\M_h$  as  the subspace of $Q=H^1_0(0,1)$, given by
 \[ 
 \M_h = \{ v_h \in V \mid v_h \text{ is linear on each subinterval}\  [x_j, x_{j + 1}]\},
 \]
  i.e., $\M_h$ is the space of all {\it continuous piecewise linear  functions} with respect to the given nodes, that {\it are zero at $x=0$ and $x=1$}.  We consider the nodal basis $\{ \varphi_j\}_{j = 1}^{n-1}$ with the standard defining property $\varphi_i(x_j ) = \delta_{ij}$.  

For the trial space $\M_h \subset Q=H^1_0(0,1)$, a  general  Petrov-Galerkin method for solving \eqref{VF1d} chooses  a test space $V_h \subset V=H^{1}_0(0,1)$ that is in general different from $\M_h$. To review  the bubble  UPG method, we first consider a continuous  bubble generating function $B:[0,h] \to\R$ with the  properties:
\begin{equation}\label{Bbounds}
B(0)=B(h)=0,\\
\end{equation}

\begin{equation}\label{b1}
\frac{1}{h}\, \int_0^h B(x)  \, dx=b \ \text{with} \ b>0.\\
\end{equation}

 Next, for $ i=1,2, \cdots, n$, we generate $n$ locally supported bubble functions  by translating $B$. We define $B_i:[0, 1] \to \R$ by $B_i(x)=B(x-x_{i-1})$ on $[x_{i-1}, x_i]$, and we extend it  by zero to the entire interval $[0, 1]$. 
 
 The {\it bubble upwinding}  idea is based on building $V_h$  such that diffusion is created from  multiplying the convection term with special test functions. 
 We define  $g_j:=\varphi_j  + B_{j}-B_{j+1}$ and  the test space $V_h$ by 
 \[
 V_h:= span \{g_j\}_{j = 1}^{n-1} =span \{\varphi_j  + B_{j}-B_{j+1} \}_{j = 1}^{n-1}.
 \]
We note that both $\M_h$ and $V_h$ have the same dimension of $(n-1)$. 

The bubble UPG discretization of 
\eqref{eq:1d-model} is: Find $u_h \in \M_h$ such that 
\begin{equation}\label{eq:1d-modelPG}
b(v_h, u_h) = \varepsilon\, a_0(u_h, v_h)+(u'_h,v_h) =(f,v_h) \ \Forall  v_h \in V_h. 
\end{equation}
The variatonal formulation \eqref{eq:1d-modelPG} admits a reformulation that uses only  {\it standard  linear finite element spaces}.  To describe the reformulation, we assume 
\[
u_h= \sum_{j=1}^{n-1} \alpha_j \varphi_j,
\]
and consider a generic {\it test function} 
\[
v_h= \sum_{i=1}^{n-1} \beta_i \varphi_i + \sum_{i=1}^{n-1}  \beta_i (B_i - B_{i+1}) = \sum_{i=1}^{n-1} \beta_i \varphi_i + \sum_{i=1}^{n}  (\beta_i - \beta_{i-1}) B_{i},
\]
where, we define $\beta_0=\beta_n=0$. By introducing the notation 
\[
B_h:=\sum_{i=1}^{n}  (\beta_i - \beta_{i-1}) B_{i},  \ \text{and } \  w_h:=  \sum_{i=1}^{n-1} \beta_i \varphi_i ,
\]
we get 
$v_h=w_h + B_h$. As presented in detail in \cite{connections4CD, comparison4CD}, 
for any $u_h \in \M_h$ and  $v_h=w_h + B_h \in V_h$, we get
\begin{equation} \label{eq:bPGr}
 b(v_h, u_h) = \left (\varepsilon + bh\right )  (u'_h, w'_h) +  (u'_h, w_h).
\end{equation}
 The addition of the bubble part to the test space  leads to the extra diffusion term $bh  (u'_h, w'_h)$ with  $bh >0$  matching the sign of the coefficient of $u'$ in  \eqref{eq:1d-model}. This technique justifies the terminology  of 
 {\it ``upwinding PG''} method. 
 
Note that only the linear part $w_h$ of $v_h$ appears in the expression of $ b(v_h, u_h)$ of \eqref{eq:bPGr}, and  the  functional  $v_h \to (f, v_h)$ can  be also viewed as a functional  of the linear part  of  $v_h\in V_h$. Indeed, Using the splitting $v_h=w_h + B_h $,  we  have
\[
(f, v_h) =(f, w_h) + (f, \sum_{i=1}^{n}  h w'_h B_i) =(f, w_h) +h\,  (f, w'_h  \sum_{i=1}^{n}   B_i). 
\]
As a consequence, the variational formulation of the upwinding  Petrov-Galerkin method can be reformulated as:   Find $u_h \in \M_h$ such that 
\begin{equation}\label{eq:1d-modelPGR}
 \left (\varepsilon + bh\right )  (u'_h, w'_h) +  (u'_h, w_h) = (f, w_h )  + h\,  (f, w'_h  \sum_{i=1}^{n}   B_i),  w_h \in M_h. 
\end{equation}

 We note that  the reformulation \eqref{eq:1d-modelPGR}   uses  the same  space of piecewise linear functions for the test space and for the  trial space. 
The diffusion coefficient of $(u'_h, w'_h)$, in \eqref{eq:1d-modelPGR} is  now $\varepsilon +h\, b$ and the corresponding bilinear form  
 is coercive. Thus, \eqref{eq:1d-modelPGR} has unique solution $u_h$ and consequently $u_h$ is the unique solution of   the bubble UPG discretization \eqref{eq:bPGr}.

The reformulation \eqref{eq:1d-modelPGR} also leads  to the  linear system 
\begin{equation}\label{1d-PG-ls}
\left ( \left (\frac{\varepsilon}{h} + b \right ) S+ C \right )\, U = F_{PG}, 
\end{equation}
where \(S,C \in\R^{n-1}\times \R^{n-1}\)  are tridiagonal matrices:
\[
S=tridiag(-1, 2, -1),   \ C= tridiag\left (-\frac12, 0, \frac12 \right ), 
 \]
 and  the vectors \(U,F_{PG}\in\R^{n-1}\)  are defined by
 \[
 U:=\begin{bmatrix}u_1\\u_2\\\vdots\\u_{n-1}\end{bmatrix},\quad F_{PG}:= \begin{bmatrix}(f,g_1)\\ (f,g_2)\\\vdots \\ (f,g_{n-1})\end{bmatrix}. 
\]

We  note that  the matrix of the finite element  system  \eqref{1d-PG-ls} is 
\begin{equation} \label{eq:M4CDfe} 
M_{fe}= tridiag\left ( -\left (\frac{\varepsilon}{h} +b \right ) - \frac{1}{2},\  2\, \left (\frac{\varepsilon}{h} +b \right ),\  -\left (\frac{\varepsilon}{h} +b \right )+ \frac{1}{2} \right ),
\end{equation}
and the matrix $M_{fe}$ depends only on  $\varepsilon, h$ and the average value $b$ of the generating bubble $B$.
 More precisely, it depends only on $\frac{\varepsilon}{h} +b$. 

\subsection{Upwinding PG with quadratic bubble functions} \label{sec:Quad-Bubbles} 

In this section, we review   a quadratic bubble UPG  for the model problem \eqref{VF1d}, that was also discussed in e.g.,  \cite{CRD-results,  zienkiewicz76, mitchell-griffiths, ZGHupwind76, zienkiewicz2014}.  In the next section, we show that choosing a special scaling parameter for  the generating bubble, the method has high order of approximation in the discrete infinity norm. 

The discrete trial space is   
 $\M_h= span\{ \varphi_j\}_{j = 1}^{n-1}$, as above.  The test space $V_h$ is a modification of $\M_h$, using {\it quadratic bubble functions}. Here are the details.
 
   First, for a parameter $\beta>0$,  we define the bubble function $B$ on $[0, h]$ by
 \begin{equation} \label{eq:Bq}
B^q(x)= B(x)= \frac{4\, \beta}{h^2} x(h-x).
 \end{equation}
 Elementary calculations show that \eqref{b1} 
 holds with $b=\frac{2\, \beta}{3}$. 
 Using the function $B$ and the general construction of Section \ref{sec:PG}, we define the set of bubble functions $\{B^q_1, B^q_2, \cdots,B^q_n\}$  on $[0, 1]$,  where $B^q_i(x)=B^q(x-x_{i-1})$, and 
 \[
 V_h:= span \{ \varphi_j + (B^q_{j}-B^q_{j+1}) \}_{j= 1}^{n-1}.
 \]
  In this case,  $b= \frac{2\beta}{3}$, and according to \eqref{eq:M4CDfe},  we obtain 
 \[
M^q_{fe}  =  tridiag\left ( -\left ( \frac{\varepsilon}{h} + \frac{2\beta}{3}\right )- \frac{1}{2},\  2 \left ( \frac{\varepsilon}{h} + \frac{2\beta}{3}\right ) ,\   -\left ( \frac{\varepsilon}{h} + \frac{2\beta}{3}\right )  + \frac{1}{2} \right ). 
\]
\subsection{Upwinding PG with exponential bubble functions} \label{sec:Exponential-Bubbles} 
We review  an exponential  bubble UPG  for the model problem \eqref{VF1d}. For more details, see   \cite{connections4CD, comparison4CD}.  The discrete trial  space space  is the same 
 $\M_h= span\{ \varphi_j\}_{j = 1}^{n-1}$. The discrete test space $V_h$ is a modification of $\M_h$ by using an {\it exponential  bubble function}. We define the bubble function $B$ on $[0, h]$ as the solution of the following boundary value problem
 \begin{equation}\label{eq:expB}
 -\varepsilon B'' -B' =1/h, \ B(0)=B(h)=0.
 \end{equation}
  Using the function $B^e=B$ and the general construction of Section \ref{sec:PG}, we define the set of bubble functions $\{B^e_1, B^e_2, \cdots,B^e_n\}$  on $[0, 1]$, where \\ $B^e_i(x)=B^e(x-x_{i-1})$,  and 
 \begin{equation}\label{eq:VhE}
 V_h:= span \{ \varphi_j + (B^e_{j}-B^e_{j+1}) \}_{j= 1}^{n-1}=span \{ g_j \}_{j= 1}^{n-1},
  \end{equation}
with $ g_j:=\varphi_j + (B^e_{j}-B^e_{j+1})$, $j=1,2,\cdots,n-1$.

 It is easy to check that the unique solution of \eqref{eq:expB} is 
 \begin{equation}\label{eq:Be}
B^e(x)= B(x)=\frac{1 - e^{-\frac{x}{\varepsilon}}}{1 - e^{-\frac{h}{\varepsilon}}} - \frac{x}{h}, \ x \in [0, h],  \ \text{and} 
  \end{equation}
   \begin{equation}\label{IntexpB}
  \frac{1}{h}\,  \int_0^{h} B(x)\, dx = \frac{1}{2 t_e} - \frac{\varepsilon}{h}, \ \text{where} 
     \end{equation}
  \begin{equation}\label{eq:g0}
 t_e:=\tanh\left (\frac{h}{2\varepsilon}\right)= \frac{ e^{\frac{h}{2\varepsilon}} -e^{-\frac{h}{2\varepsilon}}} {e^{\frac{h}{2\varepsilon}} +e^{-\frac{h}{2\varepsilon}}}= \frac  {1- e^{-\frac{h}{\varepsilon}} }{1+e^{-\frac{h}{\varepsilon}} }.
  \end{equation}
Consequently,  we have that \eqref{b1} holds with $b= \frac{1}{2 t_e} - \frac{\varepsilon}{h}$, 
  and using  \eqref{eq:M4CDfe}, we obtain that  the matrix for the UPG finite element discretization with exponential bubble test space becomes
  \begin{equation} \label{eq:M4CDfeEB}
 M_{fe}^e = tridiag\left ( -\frac{1+t_e}{2t_e} ,\  \frac{1}{t_e},\   -\frac{1-t_e}{2t_e} \right ). \end{equation}
The upwinding PG method based on the exponential bubble produces in fact the exact solution at the nodes. 
Variants of this result are known in various forms, see e.g., \cite{roos-stynes-tobiska-96, Roos-Schopf15}.  A detailed proof based on the UPG construction presented in section \ref{sec:PG},  can be found in \cite{connections4CD}. Next,  we include our version of the  result  and the main proof  ideas. 
%
\begin{theorem}\label{uFE} Let $\displaystyle u_h := \sum_{i=1}^{n-1} u_i \varphi_i$  be the finite element solution of  \eqref{eq:1d-modelPG}  with the test space as defined in \eqref{eq:VhE}. Then $u_h$ coincides with the linear interpolant $I_h(u)$ of  the exact solution $u$ of the problem  \eqref{eq:1d-model} on the nodes $x_0,x_1, \ldots, x_n$.  Equivalently, we have that 
\[
u_j=u(x_j),\  j=1,2,\cdots,n-1.
\]
\end{theorem}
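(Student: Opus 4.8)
The plan is to realise the test space $V_h$ as the span of the Green's functions of the formal adjoint operator at the interior nodes; once this is done, the theorem drops out of Galerkin orthogonality in one line.

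Write $L^\ast v:=-\varepsilon v''-v'$ for the formal adjoint of $v\mapsto -\varepsilon v''+v'$. The first thing I would record is the integration-by-parts identity
$b(v,w)=\langle L^\ast v,\,w\rangle$, valid for all $v,w\in H^1_0(0,1)$ with $L^\ast v$ understood as the distribution $-\varepsilon v''-v'$ paired against $w$; this is immediate from the definition $b(v,w)=\varepsilon(w',v')+(w',v)$ after moving both derivatives onto $v$ and using $w(0)=w(1)=0$ (no regularity beyond $H^1_0$ is needed on either argument). Since every $g_j\in H^1_0(0,1)$, the identity applies to $v=g_j$, and also to $v=G_k$ below.

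The core step is to compute $L^\ast g_j$ as a distribution on $(0,1)$. On each open subinterval, $g_j$ is either identically $0$, or $\varphi_j+B^e_j$, or $\varphi_j-B^e_{j+1}$; since $\varphi_j$ is linear there and $B^e$ solves \eqref{eq:expB}, a one-line computation gives $L^\ast g_j=0$ classically on each open subinterval. Because $g_j\in H^1_0(0,1)$, its distributional second derivative differs from the classical one only by Dirac masses located at the jumps of $g_j'$, all of which sit at interior nodes, so $L^\ast g_j$ lies in $W_h:=\operatorname{span}\{\delta_{x_1},\dots,\delta_{x_{n-1}}\}\subset H^{-1}(0,1)$. (Reading off the three nonzero coefficients from $(B^e)'(0),(B^e)'(h)$ via \eqref{eq:Be} would reproduce the corresponding row of $M_{fe}^e$ in \eqref{eq:M4CDfeEB}, but I would not need this.) Next I would note that $L^\ast$ restricted to $V_h$ is injective: if $v\in V_h$ and $L^\ast v=0$ on $(0,1)$, then $v$ solves the homogeneous ODE classically, so $v=A+Be^{-x/\varepsilon}$, and $v(0)=v(1)=0$ forces $v=0$. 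Since $\dim V_h=\dim W_h=n-1$, the map $L^\ast:V_h\to W_h$ is therefore an isomorphism; in particular, for each interior node $x_k$ there is a unique $G_k\in V_h$ with $L^\ast G_k=\delta_{x_k}$, i.e.\ $G_k$ is the Green's function of the adjoint problem at $x_k$, and $V_h=\operatorname{span}\{G_1,\dots,G_{n-1}\}$.

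Finally I would conclude. Testing \eqref{VF1d} with $v=G_k\in V_h\subset H^1_0(0,1)$ gives $b(G_k,u)=(f,G_k)$, and testing \eqref{eq:1d-modelPG} with $v_h=G_k\in V_h$ gives $b(G_k,u_h)=(f,G_k)$. Subtracting, and using the integration-by-parts identity together with $L^\ast G_k=\delta_{x_k}$,
\[
(u-u_h)(x_k)=\langle\delta_{x_k},\,u-u_h\rangle=b(G_k,\,u-u_h)=0,\qquad k=1,\dots,n-1,
\]
which is exactly $u_j=u(x_j)$; and since $u_h$ and $I_h(u)$ are both continuous piecewise linear and agree at every node, $u_h=I_h(u)$. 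The step I expect to be the main obstacle is the core computation of the previous paragraph: showing that $L^\ast$ carries $V_h$ isomorphically onto $W_h$, i.e.\ that the bubble-modified hat functions, modulo the adjoint operator, collapse precisely onto nodal Dirac masses — this is exactly where the special choice of the exponential bubble \eqref{eq:expB} (and only that choice, making the convection term an exact diffusion source $1/h$) enters. An alternative, more computational route would avoid Green's functions entirely and instead verify directly that the vector of nodal values $(u(x_k))_k$ satisfies the three-term recurrence encoded by $M_{fe}^e$, starting from $b(g_j,u)=(f,g_j)$ and the explicit matrix $M_{fe}^e$; but the Green's function argument is cleaner and matches the interpretation emphasised in the abstract.
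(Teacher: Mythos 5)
Your proof is correct, but it takes a genuinely different route from the one in the paper. The paper's argument is the ``alternative, more computational route'' you mention at the end: it multiplies the strong form of \eqref{eq:1d-model} by each $g_j$, integrates by parts to obtain the three-term relation \eqref{eq:IPde4} satisfied by the exact nodal values, and then observes that the vector $[u(x_1),\dots,u(x_{n-1})]^T$ and the coefficient vector of $u_h$ solve the same linear system with the same invertible matrix $M^e_{fe}$ and the same right-hand side $(f,g_j)$. Your argument instead shows that $L^\ast$ maps $V_h$ isomorphically onto $\operatorname{span}\{\delta_{x_1},\dots,\delta_{x_{n-1}}\}$ (the key computation $L^\ast g_j=0$ on each open subinterval, which is exactly where the defining ODE \eqref{eq:expB} of the exponential bubble enters, is sound, as is the dimension-count and the injectivity argument), so that $V_h$ is spanned by the adjoint Green's functions $G_k$ at the interior nodes, and then concludes by Galerkin orthogonality and $\langle\delta_{x_k},u-u_h\rangle=(u-u_h)(x_k)$. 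What each approach buys: the paper's version is more elementary and directly produces the matrix identity that it reuses later, while yours is structurally cleaner and in fact anticipates the content of the paper's Section \ref{sec:Exact-Inv4EB} --- by uniqueness of the adjoint Green's function, your $G_k$ is precisely $s\mapsto G(x_k,s)$, so your isomorphism argument gives an abstract, computation-free proof of the paper's Lemma 3.1 (that $G(x_k,\cdot)\in V_h$) and, after reading off coordinates, of Theorem \ref{th:InvMfe} as well. One small point of care: for $j=1$ (resp.\ $j=n-1$) the jump of $g_j'$ at $x_0$ (resp.\ $x_n$) sits on the boundary and therefore contributes no Dirac mass to the distribution $L^\ast g_j$ on the open interval $(0,1)$; your phrasing glosses over this, but the conclusion $L^\ast g_j\in\operatorname{span}\{\delta_{x_1},\dots,\delta_{x_{n-1}}\}$ is correct.
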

\begin{proof}

For  $j=1,2, \cdots,n-1$, we multiply the differential equation of  \eqref{eq:1d-model} by $g_j$ and integrate by parts to  obtain that   
\begin{equation} \label{eq:IPde4}
-\frac{1+t_e}{2\, t_e} \,  u(x_{j-1}) + \frac{1}{t_e} \,  u(x_{j})  - \frac{1-t_e}{2\, t_e} \,  u(x_{j+1})=(f, g_j).
\end{equation}
Thus,  the matrix of the system \eqref{eq:IPde4}  with  ``unknown''  vector \\ $U_e=[u(x_1),\cdots, u(x_{n-1})]^T$,
coincides with the matrix  of the system  \eqref{1d-PG-ls}  with $b= \frac{1}{2 t_e} - \frac{\varepsilon}{h}$, i.e., the matrix  $M_{fe}^e$  of \eqref{eq:M4CDfeEB} for the exponential UPG discretization. 
Since $M_{fe}^e$ is invertible, and the right hand sides of the two systems  coincide, 
we can conclude that $u_j=u(x_j)$, $j=1,2,\cdots,n-1$.
\end{proof}

\section{The exact inverse of the exponential buubble UPG  matrix} \label{sec:Exact-Inv4EB} 
In this section, we find  a very  useful formula for the inverse of the matrix $M_{fe}^e$  associated with the exponential UPG discretization.  

Assuming  that $f$ is continuous on $[0, 1]$, using the Green's function for the  problem  \eqref{eq:1d-model},  we have that the solution $u$ satisfies: 
\begin{equation}\label{eq:GreensFnc}
	u(x) = \int_0^1 G(x,s)f(s)\,ds, 
\end{equation}
where $G(x,s)$ can be explicitly determined by using standard integration arguments, and 
  \[ 
 G(x,s)=\frac{1}{e^\frac{1}{\varepsilon}-1} \begin{cases}(e^\frac{1}{\varepsilon}-e^\frac{x}{\varepsilon})(1-e^{-\frac{s}{\varepsilon}}),&0\leq s < x\\(e^\frac{x}{\varepsilon}-1)(e^\frac{1-s}{\varepsilon}-1),&x\leq s\leq 1.\end{cases}
 \] 
 With the notation of Section \ref{sec:Exponential-Bubbles}, we will prove that the entries of the inverse  matrix of the UPG with exponential bubble discretization, defined by  \eqref{eq:M4CDfeEB}, can be described by evaluations of the Green function at the cross-grid of the  interior nodes. The result allows for comparison of the exponential bubble UPG method  with other bubble UPG methods, such as the quadratic bubble UPG. 
 In order to establish the formula, we  prove the following lemma first.
 \begin{lemma}
 For any inside node $x_j=h\,j \in (0, 1) $, the function defined by  $s \to G(x_j,s)$ belongs to test space    
$ V_h  = \text{span}\{g_i\}=  \text{span}\{\varphi_i + B^e_i-B^e_{i+1}\}$, and
 
 \begin{equation}\label{eq:Gxj}
 G(x_j,s)  = \sum_{i=1}^{n-1} G(x_j, x_i) \, g_i(s), \ \text{on}\ [0, 1].
\end{equation}
 \end{lemma}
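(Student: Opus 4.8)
The plan is to verify the claimed identity \eqref{eq:Gxj} directly by checking that both sides agree as functions of $s \in [0,1]$, exploiting the characterizing properties of the Green function. First I would recall that for fixed $x_j$, the map $s \mapsto G(x_j,s)$ is the solution of the \emph{adjoint} boundary value problem: since $G(x,s)$ is the Green function for the operator $Lu = -\varepsilon u'' + u'$ with homogeneous Dirichlet conditions, the function $\phi_j(s) := G(x_j,s)$ satisfies $L^{*}\phi_j = \delta_{x_j}$ in the distributional sense, i.e. $-\varepsilon \phi_j'' - \phi_j' = 0$ on each of $(0,x_j)$ and $(x_j,1)$, with $\phi_j(0)=\phi_j(1)=0$, continuity at $x_j$, and a jump in the derivative of size $-1/\varepsilon$ across $x_j$ (the sign and normalization to be read off from the explicit formula given for $G$). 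In particular, on each subinterval $[x_{i-1},x_i]$ that does not contain $x_j$ in its interior, $\phi_j$ solves the \emph{homogeneous} equation $-\varepsilon\phi'' - \phi' = 0$.

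Next I would compare this with the structure of the test functions $g_i = \varphi_i + B^e_i - B^e_{i+1}$. The key observation is that the exponential bubble $B^e$ on $[0,h]$ is defined by $-\varepsilon (B^e)'' - (B^e)' = 1/h$, and the linear hat function $\varphi_i$ satisfies $-\varepsilon\varphi_i'' - \varphi_i' = -\varphi_i'$, which is piecewise constant equal to $\mp 1/h$ on the two subintervals adjacent to $x_i$. Therefore the combination $\varphi_i - B^e_{i+1}$ on $[x_i,x_{i+1}]$ satisfies $-\varepsilon(\cdot)'' - (\cdot)' = -1/h + 1/h = 0$, and similarly $\varphi_i + B^e_i$ on $[x_{i-1},x_i]$ is annihilated by $-\varepsilon(\cdot)'' - (\cdot)'$; on all other subintervals $g_i$ vanishes identically. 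Hence each $g_i$, and thus every element of $V_h$, solves the homogeneous equation $-\varepsilon w'' - w' = 0$ on every subinterval $[x_{k-1},x_k]$ except possibly picking up jumps in $w'$ at the interior nodes. Crucially, $g_i$ is \emph{continuous} at every node (both $\varphi_i$ and the $B^e$'s are continuous, vanishing at nodes), $g_i(0)=g_i(1)=0$, and $g_i(x_k) = \delta_{ik}$.

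I would then argue uniqueness: the right-hand side $R(s) := \sum_{i=1}^{n-1} G(x_j,x_i)\,g_i(s)$ is, like $\phi_j$, a function on $[0,1]$ that (i) vanishes at $s=0$ and $s=1$, (ii) is continuous on $[0,1]$, (iii) satisfies the homogeneous equation $-\varepsilon R'' - R' = 0$ on each open subinterval $(x_{k-1},x_k)$, and (iv) takes the nodal value $R(x_k) = \sum_i G(x_j,x_i)\delta_{ik} = G(x_j,x_k) = \phi_j(x_k)$. But on any subinterval the homogeneous equation has a two-dimensional solution space spanned by $\{1, e^{-s/\varepsilon}\}$, so a solution there is uniquely determined by its two endpoint values. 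Since $\phi_j$ and $R$ share all nodal values and both satisfy the homogeneous ODE between consecutive nodes, they must coincide on every subinterval, hence on all of $[0,1]$. This simultaneously shows $\phi_j \in V_h$ (because $\phi_j$ equals the explicit element $R$ of $V_h$) and establishes \eqref{eq:Gxj}.

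The main obstacle — really the only delicate point — is the bookkeeping at the node $x_j$ itself: one must confirm that the jump $[\phi_j'] = \phi_j'(x_j^+) - \phi_j'(x_j^-)$ produced by the Green function matches exactly the jump that the combination $g_{j-1}\cdot G(x_j,x_{j-1}) + g_j \cdot G(x_j,x_j) + g_{j+1}\cdot G(x_j,x_{j+1})$ produces there, since near $x_j$ this is where the bubble-generated source terms $1/h$ do not cancel the hat-function contribution. I would handle this by noting that the uniqueness argument above already forces agreement once nodal values match and the ODE holds on each open subinterval — the derivative jump at $x_j$ is then automatically consistent, being determined by the adjacent subinterval solutions. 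Alternatively, one can verify it by a short direct computation from the explicit formulas for $G$ and $B^e$, using $t_e = \tanh(h/2\varepsilon)$ and the entries of $M_{fe}^e$ from \eqref{eq:M4CDfeEB}; but the cleaner route is to lean entirely on the subinterval-wise uniqueness so that no explicit differentiation is needed.
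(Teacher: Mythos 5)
Your argument is correct, and it reaches the identity by a different mechanism than the paper. The paper localizes to a single subinterval $[x_{i-1},x_i]$ (using that only $g_{i-1},g_i$ are supported there and that $g_{i-1}+g_i=1$ on it), reduces \eqref{eq:Gxj} to the scalar identity $G(x_j,s)-G(x_j,x_{i-1})=g_i(s)\,\bigl(G(x_j,x_i)-G(x_j,x_{i-1})\bigr)$, and then verifies this by direct computation from the explicit formula for $G$ in the two cases $j\ge i$ and $j<i$. You instead characterize both sides structurally: $s\mapsto G(x_j,s)$ solves the homogeneous adjoint equation $-\varepsilon w''-w'=0$ on every open subinterval (the singularity sits at a node), and each $g_i=\varphi_i+B^e_i-B^e_{i+1}$ solves the same homogeneous equation elementwise precisely because the defining equation \eqref{eq:expB} for $B^e$ makes the source $1/h$ cancel $\mp\varphi_i'$; since the solution space on each subinterval is the two-dimensional $\operatorname{span}\{1,e^{-s/\varepsilon}\}$, agreement of the nodal values forces agreement everywhere. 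This buys a proof that needs no explicit manipulation of $G$ and makes visible \emph{why} the exponential bubble is the right one (it is the unique bubble turning the hat into a discrete adjoint-harmonic function); the paper's computation is more elementary and self-contained but hides that structure. Your closing worry about matching the derivative jump at $x_j$ is, as you ultimately note, moot: the subinterval-wise uniqueness from nodal data never invokes derivative continuity, so no jump bookkeeping is required. The only cosmetic caveat is that the asserted jump size $-1/\varepsilon$ and the identification of $L^*$ are not actually used and could be dropped; everything needed is that $G(x_j,\cdot)$ is continuous, vanishes at the endpoints, and is adjoint-harmonic on each open subinterval, all of which are immediate from the displayed formula for $G$.
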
 
 \begin{proof}
 First, we mention that,  for  $i=1,2, \cdots, n-1$,  the  test functions $g_i$ is supported in $[x_{i-1}, x_{i+1}]$  and    
\begin{equation}\label{eq:gj}
 g_i =  \left\{
 \begin{array}{ccl}
    \displaystyle  B^e_i +   \varphi_i  & \mbox{if } \  x \in [x_{i-1}, x_i],\\ \\
    \displaystyle  - B^e_{i+1} +   \varphi_i  & \mbox{if } \  x \in [x_i, x_{i+1}].
\end{array} 
\right.
\end{equation}
To justify \eqref{eq:Gxj}, it would be enough to show that on each interval $[x_{i-1}, x_i]$, we have 
\begin{equation}\label{eq:gj2}
G(x_j,s)_{|_{[x_{i-1}, x_i]}} = G(x_j,x_{i-1}){g_{i-1}}_{|_{[x_{i-1}, x_i]}}  + G(x_j,x_i)\, {g_i}_{|_{[x_{i-1}, x_i]}}. 
\end{equation}
Based on  \eqref{eq:gj}, the identity \eqref{eq:gj2} is equivalent  with showing that  on each interval $[x_{i-1}, x_i]$,
\begin{equation}\label{eq:gj3}
G(x_j,s) - G(x_j,x_{i-1}) = g_i \, \left (G(x_j,x_{i}) -G(x_j,x_{i-1} \right ). 
\end{equation}
This can be easily  verified  by considering the  cases:  I) $j\geq i$ and II)  $j < i$. 
 \end{proof}
 Next, we define the $(n-1)\times (n-1)$ Green matrix $G^m$ with the entries 

  \begin{equation} \label{eq:Gm}
 G^m_{j, i} = G(x_j, x_i), \ \text{where} \ x_j=h\,j, j=1,2,\cdots, n-1.
\end{equation}
 Now we are ready to state the main result of this section:
 
\begin{theorem}\label{th:InvMfe} The inverse  of the exponential bubble UPG finite element discretization matrix  \eqref{eq:M4CDfeEB} is given by

  \begin{equation} \label{eq:InvMfe}
(M^e_{fe})^{-1} = G^m. 
\end{equation}
\end{theorem}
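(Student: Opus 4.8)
The plan is to combine Theorem~\ref{uFE}, the Green's function representation \eqref{eq:GreensFnc}, and the expansion provided by the preceding Lemma, and then to eliminate the dependence on the data $f$ by a short surjectivity argument.

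First I would fix an arbitrary continuous $f$ on $[0,1]$ and let $u$ be the exact solution of \eqref{eq:1d-model}, so $u(x)=\int_0^1 G(x,s)f(s)\,ds$. Evaluating at an interior node $x_j$ and substituting the identity \eqref{eq:Gxj} gives
\[
u(x_j)=\int_0^1 G(x_j,s)f(s)\,ds=\sum_{i=1}^{n-1}G(x_j,x_i)\int_0^1 g_i(s)f(s)\,ds=\sum_{i=1}^{n-1}G^m_{j,i}\,(f,g_i),
\]
i.e. the vector $U_e:=[u(x_1),\dots,u(x_{n-1})]^T$ equals $G^m F_{PG}$, where $F_{PG}=[(f,g_1),\dots,(f,g_{n-1})]^T$ is precisely the load vector in \eqref{1d-PG-ls}. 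On the other hand, Theorem~\ref{uFE} states that the UPG solution satisfies $u_j=u(x_j)$, so the solution vector $U$ of $M^e_{fe}U=F_{PG}$ coincides with $U_e=G^mF_{PG}$. Hence $M^e_{fe}\,(G^m F_{PG})=F_{PG}$ for every continuous $f$.

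To conclude $M^e_{fe}G^m=I$ it remains to check that $f\mapsto F_{PG}$ has full range $\R^{n-1}$; this is the only step needing a genuine (though easy) argument. The functions $g_i=\varphi_i+B^e_i-B^e_{i+1}$ are continuous on $[0,1]$ (each $B^e_i$ is smooth on its supporting subinterval and vanishes at both endpoints of it, and the hat functions are continuous) and linearly independent, so their Gram matrix $\Gamma:=[(g_i,g_j)]_{i,j=1}^{n-1}$ is symmetric positive definite, hence invertible. For arbitrary $c\in\R^{n-1}$, the continuous function $f:=\sum_{k=1}^{n-1}(\Gamma^{-1}c)_k\,g_k$ yields $F_{PG}=\Gamma\,\Gamma^{-1}c=c$. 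Therefore $M^e_{fe}G^m c=c$ for all $c\in\R^{n-1}$, so $M^e_{fe}G^m=I$; for square matrices this already forces $G^m=(M^e_{fe})^{-1}$, which is \eqref{eq:InvMfe}.

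The bulk of the analytic work — the ODE computation of $B^e$, the Green's function formula, and the case check \eqref{eq:gj3} — is already packaged in the Lemma and in Theorem~\ref{uFE}, so the remaining difficulty is essentially bookkeeping: matching $G^mF_{PG}$ to the nodal values and justifying the surjectivity of $f\mapsto F_{PG}$. If one prefers to avoid the Gram-matrix step, an alternative is to test against $f=g_1,\dots,g_{n-1}$ and observe that the resulting load vectors span $\R^{n-1}$, which again reduces to the linear independence of the $g_i$; either way this is the only new ingredient beyond what has already been established, and it is routine.
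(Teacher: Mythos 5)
Your proof is correct and follows essentially the same route as the paper: represent the nodal values via the Green's function together with the lemma's expansion of $G(x_j,\cdot)$ in the test basis $\{g_i\}$, identify the resulting vector with $(M^e_{fe})^{-1}F_{PG}$ using the nodal exactness of the exponential UPG scheme, and let $f$ vary over continuous functions. The only difference is that you explicitly verify the surjectivity of $f\mapsto F_{PG}$ via the Gram matrix of the $g_i$ (or by testing with $f=g_1,\dots,g_{n-1}$), a point the paper passes over with the phrase ``since both identities hold for all continuous functions $f$''; this is a welcome, if routine, completion of the argument.
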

 \begin{proof}
 
 Using the Green's formula for finding the exact solution  $u$  of \eqref{eq:1d-model} at  $x=x_j$, and the identity \eqref{eq:Gxj}, we have 
 \begin{equation} \label{eq:InvMfe2}
u(x_j)  = \int_0^1 G(x_j,s) f(s)\, ds = \sum_{i=1}^{n-1} G(x_j, x_i)  (f,g_i).
\end{equation}
Thus, the vector  $U_e=[u(x_1),\cdots, u(x_{n-1})]^T$ satisfies 
\[
U_e= G^m\, \tildeu{f},
\]
 where $\tildeu{f}:=[(f,g_1), \cdots, (f,g_{n-1}) ]^T$. 
 On the other hand, from \eqref{eq:IPde4} and \eqref{eq:M4CDfeEB}, we obtain 
\[
U_e= (M^e_{fe})^{-1} \, \tildeu{f}. 
\]
Since both  identities hold for all  continuous functions $f$, we can confirm the validity of \eqref{eq:InvMfe}.
\end{proof}
\begin{remark}\label{re:magicB} The result of Theorem \ref{th:InvMfe} holds for the matrix $M^e_{fe}$  of the exponential bubble UPG discretization. However, the formula  \eqref{eq:M4CDfe}  for the general UPG discreization matrix depends only on $\varepsilon$, $h$, and the average $b$ of the generating bubble $B$. Thus, we can rescale the bubble $B$ to have the same average as
the average of the  exponential generating bubble $B^e$. \\  In this case, we will  also have $(M_{fe})^{-1} = (M^e_{fe})^{-1} =G^m$. As an example of such bubble scaling procedure, we consider  the quadratic bubble UPG with the special choice of $\beta$ such that 
\begin{equation}\label{eq:magicB}
b=\frac{2\, \beta}{3} =  \frac{1}{2 t_e} - \frac{\varepsilon}{h}, \ \text{or} 
\ \beta = \frac{3}{4} \left ( \frac{1}{\tanh \left(\frac{h}{2\,\varepsilon}\right )} -
 \frac{2\,\varepsilon}{h} \right ), 
\end{equation}
 and obtain that 
\[
M^q_{fe} = M^e_{fe} \ \text{and consequently}, \ (M^q_{fe})^{-1} = (M^e_{fe})^{-1} =G^m.
\]
In the next section, we see that the special  choice for the  scaling parameter $\beta$  helps with finding  a sharp estimate for the discrete infinity error of the  bubble UPG method. 

\end{remark}


\section{The  quadratic bubble UPG  approximation} \label{sec:B2} 

When discretizing with the exponential bubble UPG, we note that \\  $B^e(x)\approx 1- \frac{x}{h}$ for $\varepsilon \ll h$. In computations, such approximation occurs often  due to  the rounding  error in the double precision arithmetic. For example, $1 \pm e^{-36.05}$ is computed as $1$ in double precision arithmetic. Thus, whenever $h \geq 36.05\,  \varepsilon$, the function  $B^e(x)$ is identical to  $1- \frac{x}{h}$ from the computational point of view. Consequently, the error in computing  the dual vector  could lead to significant  errors in estimating the  discrete solutions. When the exact solution is available,  there are cases when the  quadratic bubble UPG method, with special scaling $\beta$, performs better than the exponential  bubble UPG method. In this section, we will analyze the convergence  of the  quadratic bubble UPG  approximation with the special scaling $\beta$.
 \begin{theorem} \label{th:T}  Let  $f\in C^1([0, 1])$, and let  $u$  be the solution of  the  problem  \eqref{eq:1d-model}.   Let  $\displaystyle u_h=\sum_{i=1}^{n-1} u_i \varphi_i$ be  the solution of  quadratic bubble UPG discretization  \eqref{eq:1d-modelPG} with  scaling parameter $\beta$ given by \eqref{eq:magicB}. Assume  that, for a given $\varepsilon \ll 1$, the mesh size $h$ is chosen such that $e^{-\frac{h}{\varepsilon}} \leq h$. Then, 
 \begin{equation}\label{eq:ConvB2}
 \max_{j=\overline{1,n-1}}\  \left | u(x_j)-u_j \right | \leq  6\, \varepsilon \, \|f\|_\infty + \frac{3}{4}\, h^2\, \|f'\|_\infty . 
 \end{equation}
 \end{theorem}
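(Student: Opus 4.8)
The plan is to exploit the exact identity $(M^q_{fe})^{-1}=G^m$ from Remark~\ref{re:magicB}, which reduces the error analysis to comparing two linear systems with the \emph{same} matrix $M^e_{fe}$ but different right-hand sides. Let $U_e=[u(x_1),\dots,u(x_{n-1})]^T$ and $U=[u_1,\dots,u_{n-1}]^T$. By Theorem~\ref{uFE} (equivalently \eqref{eq:InvMfe2}) we have $M^e_{fe}\,U_e=\tildeu{f}^e$ where $\tildeu{f}^e_i=(f,g_i^e)$ uses the \emph{exponential} bubble test functions, whereas by definition of the quadratic bubble UPG discretization $M^q_{fe}\,U=\tildeu{f}^q$ with $\tildeu{f}^q_i=(f,g_i^q)$ using the \emph{quadratic} bubble test functions. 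Since $M^q_{fe}=M^e_{fe}=:M$, subtracting gives
\[
M\,(U_e-U)=\tildeu{f}^e-\tildeu{f}^q,
\qquad\text{hence}\qquad
U_e-U = G^m\,(\tildeu{f}^e-\tildeu{f}^q).
\]
The $i$-th component of the right-hand side is $(f,\,g_i^e-g_i^q)=(f,\,(B^e_i-B^q_i)-(B^e_{i+1}-B^q_{i+1}))$, because the linear parts $\varphi_i$ cancel. So the whole error is controlled by how well the quadratic bubble (with the magic scaling $\beta$ chosen to match averages) approximates the exponential bubble, integrated against $f$.

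The key steps, in order, would be: (1) record $U_e-U=G^m\delta$ with $\delta_i=(f,\,(B^e_i-B^q_i)-(B^e_{i+1}-B^q_{i+1}))$; (2) bound the entries of $G^m$ and its row sums — from the explicit Green's function one sees $0\le G(x,s)\le$ (something like) $1$ uniformly (using $b_1\ge 0$, $\varepsilon>0$), and more usefully $\sum_i |G(x_j,x_i)|\cdot(\text{local length})$ stays $O(1)$, or even better each row of $G^m$ has $\ell^1$-norm $O(1/h)$ times the pointwise bound; (3) estimate $\int_0^h (B^e-B^q)\,\psi$ for a generic smooth $\psi$, using $B^e(0)=B^q(0)=B^e(h)=B^q(h)=0$ and, crucially, $\int_0^h B^e=\int_0^h B^q$ (the defining property \eqref{eq:magicB}), so that testing against a \emph{constant} gives zero and only the variation of $\psi$ (i.e.\ $h\|f'\|_\infty$) and the pointwise size of $B^e-B^q$ enter; (4) since $B^q$ and, under the hypothesis $e^{-h/\varepsilon}\le h$, also $B^e$ are each within $O(\varepsilon)+O(h)$-type bounds of the common ``limit'' bubble $1-x/h$ on $[0,h]$, control $\|B^e-B^q\|$ in the relevant norm; (5) assemble: a telescoping/summation-by-parts rearrangement $\sum_i G(x_j,x_i)(\delta^+_i-\delta^+_{i+1})=\sum_i (G(x_j,x_i)-G(x_j,x_{i-1}))\delta^+_i$ turns differences of $G$ along a row into something $O(h)$ in size times $(n-1)$ terms, producing the final $6\varepsilon\|f\|_\infty+\tfrac34 h^2\|f'\|_\infty$ after tracking the explicit constants (the $\tfrac34$ plausibly coming from $b=\tfrac{2\beta}{3}$ and the $\int_0^h B^q=\tfrac{2\beta h}{3}$, the $6$ from the Green's-function/geometric-series bound).

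The main obstacle I anticipate is Step~(5): getting the \emph{sharp} constants $6$ and $\tfrac34$ rather than a crude $O(\varepsilon)+O(h^2)$. This requires (a) an exact or near-exact bound on the row sums of $G^m$ — using the explicit formula for $G$ and the fact that $\sum_s G(x,s)\Delta s$ is essentially $\int_0^1 G(x,s)\,ds = u_{\text{(for }f\equiv1)}(x)\le$ some explicit value like $x\le 1$, combined with a Lipschitz-in-$x$ bound $|G(x,x_i)-G(x,x_{i-1})|\le C h$; and (b) a careful two-term analysis of $(f,B^e_i-B^q_i)$: write it as $(f(x_i)+(f-f(x_i)),\,B^e-B^q)$ so the first piece is $f(x_{i-1})\!\cdot\!\int(B^e-B^q)=0$ by the matched-average property, leaving $\|f'\|_\infty\int_0^h x|B^e-B^q|\,dx \lesssim h^2\cdot(\text{bubble gap})$, while the genuinely $O(\varepsilon)$ contribution comes from the fact that $B^e$ itself only equals $1-x/h$ up to $O(\varepsilon/h)$ corrections that survive integration against $f$ — here the hypothesis $e^{-h/\varepsilon}\le h$ is exactly what makes the exponentially small terms in $B^e$ negligible and isolates a clean $O(\varepsilon)$ remainder. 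Everything else (invertibility, the reformulation, the exact inverse) is already available from the earlier sections, so the proof is essentially an exercise in bounding $\|G^m(\tildeu{f}^e-\tildeu{f}^q)\|_\infty$ with explicit constants.
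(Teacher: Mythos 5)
Your skeleton is exactly the paper's: both discretizations share the matrix $M^e_{fe}=M^q_{fe}$, hence $U_e-U=G^m(\tildeu{f}^e-\tildeu{f}^q)$, the linear parts cancel, and a summation by parts reduces everything to $\sum_i\bigl(G(x_j,x_i)-G(x_j,x_{i-1})\bigr)(f,B^e_i-B^q_i)$. The genuine gap is in your step (2)/(5), the Green-matrix estimate. The consecutive differences $G(x_j,x_i)-G(x_j,x_{i-1})$ are \emph{not} ``$O(h)$ in size'': the explicit formula shows that exactly two of them (at $i=1$, where $1-e^{-s/\varepsilon}$ climbs from $0$ to essentially $1$ within one cell, and at $i=j+1$, where $G(x_j,\cdot)$ drops from $\approx 1$ to $\approx e^{-h/\varepsilon}$) have magnitude close to $1$, while the remaining $n-2$ are bounded by $e^{-h/\varepsilon}$. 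The hypothesis $e^{-h/\varepsilon}\le h=1/n$ is used precisely here, to get $2+(n-2)e^{-h/\varepsilon}\le 3$; you instead attribute its role to controlling exponential terms inside $B^e$, which is a misdiagnosis. Without the ``two big jumps plus exponentially small tail'' structure you cannot recover the factor $3$ that produces the stated constants.

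A second, smaller error: your step (4) claims both bubbles are within $O(\varepsilon)+O(h)$ of $1-x/h$. That is false for $B^q$ (it vanishes at $x\to 0^+$ where $1-x/h\to 1$, and equals $\approx 3/4$ at the midpoint versus $1/2$); the quadratic bubble matches only the \emph{average} of $B^e$, not its shape, and $B^e-B^q$ tends pointwise to $(1-x/h)(1-3x/h)$, which is $O(1)$. What rescues the argument is that this limit has zero mean on $[0,h]$. Your matched-average idea in step (3) (subtracting a constant from $f$) is a legitimate alternative to the paper's route — the paper instead writes $B^e-B^q=B^h+B^h_\varepsilon$ with $B^h=(1-x/h)(1-3x/h)$, integrates by parts against the antiderivative $h(t-2t^2+t^3)$ vanishing at both endpoints to get $\tfrac{h^2}{4}\|f'\|_\infty$, and bounds the two pieces of $B^h_\varepsilon$ by $\varepsilon\|f\|_\infty$ each — but your version would yield a remainder of the form $\varepsilon h\|f'\|_\infty$ rather than $\varepsilon\|f\|_\infty$ and different numerical constants, so as written it does not deliver the bound $6\varepsilon\|f\|_\infty+\tfrac34 h^2\|f'\|_\infty$ claimed in the theorem.
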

 \begin{proof}  Using Theorem \ref{uFE}, Theorem \ref{th:InvMfe}, and Remark \ref{re:magicB}, we have that  the vectors $U_e=[u(x_1),\cdots u(x_{n-1})]^T$ and $U_q=[u_1,\cdots, u_{n-1}]^T$  satisfy 
\[
U_e= G^m\, \tildeu{f}^e, \and \ \ U_q= G^m\, \tildeu{f}^q
\]
 where 
 \[
 \begin{aligned}
 \tildeu{f}^e &=[(f,\varphi_1 +(B^e_1-B^e_2)), \cdots, (f,\varphi_{n-1} + (B^e_{n-1}-B^e_n)) ]^T,   \\
 \tildeu{f}^q &=[(f,\varphi_1 +(B^q_1-B^q_2)), \cdots, (f,\varphi_{n-1} + (B^q_{n-1}-B^q_n)) ]^T, \ \text{and} 
 \end{aligned}
 \]
 $G^m$ is the Green matrix defined in \eqref{eq:Gm}.
 Thus, for $j=1,2,\cdots, n-1$,
 \[
 u(x_j)-u_j = \sum_{i=1}^{n-1} G(x_j,x_i) \left (f, B^e_i-B^q_i -(B^e_{i+1}-B^q_{i+1}) \right ).
 \]
 Introducing the notation $B^d_i:=B^e_i-B^q_i$, $i=1,2,\cdots, n$ we have
 \[
 u(x_j)-u_j = \sum_{i=1}^{n-1} G(x_j,x_i) \left (f, B^d_i-B^d_{i+1}  \right ).
 \] 

 Next, we estimate  $u(x_j)-u_j$ by using summation by parts  with respect to the $i$-index in the right hand side of the above  sum. Thus,  we have 
\begin{equation}\label{eq:dif1}
 u(x_j)-u_j = \sum_{i=1}^{n} \left ( G(x_j,x_i)  -G(x_j,x_{i-1})  \right )\,  (f, B^d_i ),
 \end{equation}
 
where 
\[
G(x_j,x_0) =G(x_j,x_n) =0, \ \text{for}\  j=1,2,\cdots, n-1.
\]

 For a fixed  $j=1,2,\cdots, n-1$,  it is not difficult  to check that 
\[ 
| G(x_j,x_i)  -G(x_j,x_{i-1})| <
 \begin{cases} 1 , & \text{for} \  i=1 \ \text{and} \ i=j+1,\\ 
 e^{-\frac{h}{\varepsilon}} ,&  \text{for} \   i \neq 1\  \text{or} \ i \neq j+1.
 \end{cases}
 \]
 Consequently, from \eqref{eq:dif1}, we have
 \[
 \begin{aligned}
  |u(x_j)-u_j| & \leq  |(f, B^d_1)| + |(f, B^d_{j+1})| +  \sum_{ i \neq 1, i \neq j+1}  e^{-\frac{h}{\varepsilon}}\, 
   |(f, B^d_i )| \\
   & \leq  \max_{i=\overline{1,n}}\,  |(f, B^d_i )|  \left (2+ (n-2) e^{-\frac{h}{\varepsilon}}\right ).
   \end{aligned}
 \]
 Using the assumption  $e^{-\frac{h}{\varepsilon}} \leq h =1/n$, we obtain 
 
 \begin{equation}\label{eq:dif2}
 | u(x_j)-u_j | \leq 3\,  \max_{i=\overline{1,n}}\,  |(f, B^d_i )|. 
 \end{equation}
To estimate $ |(f, B^d_i )|$, using the change of variable $x=t-x_{i-1}$, we get
 \begin{equation}\label{eq:fBd}
 \begin{aligned}
 (f, B^d_i ) & = \int_{x_{i-1}}^{x_i} f(t) (B^e_i(t) -B^q_i(t))\, dt \\ 
                 & =  \int_0^h f(x_{i-1}+x) (B^e(x) -B^q(x))\, dx, 
  \end{aligned}
 \end{equation}
 where $B^e$ is defined in \eqref{eq:Be} and $B^q$ is defined in \eqref{eq:Bq} with the special choice of $\beta$ given by  \eqref{eq:magicB}. 
 Next, using  \eqref{eq:Be} and \eqref{eq:Bq}, we write 
 \begin{equation}\label{eq:BemBq}
 B^e(x) -B^q(x) = B^h(x) + B^h_\varepsilon (x), 
 \end{equation}
 where 
 \[
B^h(x)=\lim_{\varepsilon/h \to 0} (B^e(x) -B^q(x)) = 1- \frac{x}{h} - 3 \frac{x}{h} \left ( 1 -  \frac{x}{h} \right ) = \left ( 1 -  \frac{x}{h} \right ) \left ( 1 -  3 \frac{x}{h} \right ), 
 \]
 is independent of $\varepsilon$, and  $B^h_\varepsilon (x)= B^e(x) -B^q(x)- B^h(x)$, i.e., 
  \[
 B^h_\varepsilon (x) = \frac{1 - e^{-\frac{x}{\varepsilon}}}{1 - e^{-\frac{h}{\varepsilon}}} -1 +
 3 \frac{x}{h} \left ( 1 -  \frac{x}{h} \right )\left ( 1- \frac{1 + e^{-\frac{h}{\varepsilon}}}{1 - e^{-\frac{h}{\varepsilon}}}   + \frac{2 \varepsilon}{h} \right).   
 \]
 Using the change of variable $ x=h\, t$ and integration by parts, we have
 \[
   \begin{aligned}
 \int_0^h f(x_{i-1}+x)\,  B^h(x)\, dx &= h  \int_{0}^{1} f(x_{i-1}+h\,t) (1-t)(1-3t)\, dt\\
  &= h \int_{0}^{1} f(x_{i-1}+h\,t) (t-4t^2+t^3)^{'} \, dt \\ 
  &=-h^2 \int_{0}^{1}f'(x_{i-1}+h\,t)(t-4t^2+t^3)\, dt,
    \end{aligned}
 \]
which leads to 
\begin{equation}\label{eq:fBhe}
 \left | \int_0^h f(x_{i-1}+x)\,  B^h(x)\, dx\right | \leq \frac {h^2}{4} \|f'\|_\infty.
 \end{equation}

 For the other integral, using the notation $l_0:= (1 - e^{-\frac{h}{\varepsilon}})^{-1}$ 
 we have 
 \[
 B^h_\varepsilon (x) = l_0\left ( e^{-\frac{h}{\varepsilon}} - e^{-\frac{x}{\varepsilon}} \right )+
 6 \frac{x}{h} \left ( 1 -  \frac{x}{h} \right ) \left ( \frac{\varepsilon}{h} -l_0 e^{-\frac{h}{\varepsilon}} \right ), \ \text {and consequently,} 
 \]
 
 \[
 \begin{aligned}
 \left | \int_0^h f(x_{i-1}+x)\, B^h_\varepsilon (x)\, dx\right | & \leq l_0 \left | \int_0^h f(x_{i-1}+x)\, \left ( e^{-\frac{h}{\varepsilon}} - e^{-\frac{x}{\varepsilon}} \right )\, dx \right | \\ & +  
  6 \left | \int_0^h f(x_{i-1}+x)\, \frac{x}{h} \left ( 1 -  \frac{x}{h} \right ) \left ( \frac{\varepsilon}{h} -l_0 e^{-\frac{h}{\varepsilon}} \right )\, dx\right|.
  \end{aligned}
 \]
The change of variable $ x=h\, t$ in the first integral, leads to 
 \[
  \begin{aligned}
 l_0  \left | \int_0^h f(x_{i-1}+x) \left ( e^{-\frac{h}{\varepsilon}} - e^{-\frac{x}{\varepsilon}} \right )\, dx\right | & =
 l_0 h \left |  \int_{0}^{1} f(x_{i-1}+h\,t)  \left ( e^{-\frac{h}{\varepsilon}} - e^{-\frac{ht}{\varepsilon}} \right )\, dt\right |
 \\ &\leq l_0 h \|f\|_\infty  \int_{0}^{1}   \left ( e^{-\frac{h}{\varepsilon}} - e^{-\frac{ht}{\varepsilon}} \right)dt  \\ &= l_0 h  \|f\|_\infty  \left ( \frac{\varepsilon}{h} \left (1- e^{-\frac{h}{\varepsilon}} \right) -e^{-\frac{h}{\varepsilon}} \right ) \leq  \varepsilon  \|f\|_\infty. 
   \end{aligned}
 \]
It easy to check  that $ \frac{\varepsilon}{h} >l_0 e^{-\frac{h}{\varepsilon}}$. 
Thus,  for the second integral, we obtain 
 \[
   \begin{aligned}
 & 6 \left | \int_0^h f(x_{i-1}+x)\, \frac{x}{h} \left ( 1 -  \frac{x}{h} \right ) \left ( \frac{\varepsilon}{h} -l_0 e^{-\frac{h}{\varepsilon}} \right )\, dx \right | = \\     =  &6\, h   \left ( \frac{\varepsilon}{h} -l_0 e^{-\frac{h}{\varepsilon}} \right ) \left | \int_0^1  f(x_{i-1}+h\,t)   t(1-t)\, dt \right | \leq \varepsilon \|f\|_\infty.
    \end{aligned}
 \]
Combining the above  estimates with \eqref{eq:dif2}, \eqref {eq:fBd}, \eqref{eq:BemBq}, \eqref {eq:fBhe}   leads to \eqref{eq:ConvB2}.
\end{proof} 
  We note here  that for $\varepsilon \leq h^2$, the hypothesis  $e^{-\frac{h}{\varepsilon}} \leq h$ is satisfied. Thus, we have the following corollary of Theorem \ref{th:T}. 
 \begin{corollary}
 Under the hypotheses of Theorem \ref{th:T}, for $\varepsilon \leq h^2$,  we have
 \[
  \max_{j=\overline{1,n-1}}\  \left | u(x_j)-u_j \right |\leq      h^2 \left(6\,  \|f\|_\infty + \frac{3}{4}\,\, \|f'\|_\infty\right )= O(h^2). 
  \]
 
 \end{corollary}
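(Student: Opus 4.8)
The plan is to obtain this corollary as an immediate consequence of Theorem~\ref{th:T}: it suffices to check that the standing hypothesis $e^{-h/\varepsilon}\le h$ of that theorem is automatically satisfied once $\varepsilon\le h^2$, and then to absorb the first term of the bound \eqref{eq:ConvB2} into the second term by using $\varepsilon\le h^2$ once more.

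First I would verify the mesh condition. Since $h=1/n$, the assumption $\varepsilon\le h^2$ gives $h/\varepsilon\ge 1/h=n$, hence $e^{-h/\varepsilon}\le e^{-n}$. The elementary inequality $e^{t}\ge 1+t$, valid for all real $t$, yields $e^{n}\ge n$, i.e. $e^{-n}\le 1/n=h$. Combining the two estimates gives $e^{-h/\varepsilon}\le h$, which is precisely the hypothesis required in Theorem~\ref{th:T}; moreover $\varepsilon\le h^2\le 1$ keeps us in the convection-dominated regime, so all the hypotheses of Theorem~\ref{th:T} (with $f\in C^1([0,1])$ and the special scaling $\beta$ of \eqref{eq:magicB}) are in force.

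Then I would apply Theorem~\ref{th:T}, which gives
\[
\max_{j=\overline{1,n-1}}\ \left|u(x_j)-u_j\right|\le 6\,\varepsilon\,\|f\|_\infty+\tfrac34\,h^2\,\|f'\|_\infty,
\]
and bound the first summand by $6\,\varepsilon\,\|f\|_\infty\le 6\,h^2\,\|f\|_\infty$, which produces the stated estimate $h^2\bigl(6\,\|f\|_\infty+\tfrac34\,\|f'\|_\infty\bigr)$. Since $\|f\|_\infty$ and $\|f'\|_\infty$ are fixed constants once $f$ is given, the right-hand side is $O(h^2)$ as $h\to0$.

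There is no real obstacle here: the argument is a two-line reduction to Theorem~\ref{th:T}. The only point requiring (minimal) care is the uniform-in-$n$ implication $\varepsilon\le h^2\Rightarrow e^{-h/\varepsilon}\le h$, equivalently $n\le e^{n}$ for every integer $n\ge1$, which follows immediately from $e^{t}\ge 1+t$.
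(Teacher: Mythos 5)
Your argument is correct and matches the paper's (implicit) proof exactly: the paper simply notes that $\varepsilon \leq h^2$ implies $e^{-h/\varepsilon}\leq h$ and then invokes Theorem~\ref{th:T}, absorbing the $6\varepsilon\|f\|_\infty$ term via $\varepsilon\leq h^2$. Your verification of the implication $\varepsilon\leq h^2\Rightarrow e^{-h/\varepsilon}\leq h$ via $e^{n}\geq 1+n\geq n$ is a welcome added detail the paper leaves unstated.
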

The computations using quadratic bubble UPG discretization with  the choice of  $\beta$ given by \eqref{eq:magicB}, show order $O(h^2)$ or better  for the range of $h$  such that  $\varepsilon \leq h^2$. The order could strictly decrease  for different values $\beta$.

\section{Standard norm approximation for  quadratic bubble UPG } \label{sec:B2norms} 

In this section, we take advantage of the error analysis in the discrete infinity norm of  Theorem \ref{th:T}, and prove error estimates for the quadratic bubble UPG  in the standard $L^2$ norm $\|\cdot\|$  and the standard $H^1$ norm  $|\cdot |$. 

 It is known that the largest eigenvalue of the tridiagonal matrix $S$ as defined in Section \ref{sec:FE} can be bounded above by $4$. Thus, for any \\ $\alpha =[ \alpha_1, \alpha_2, \cdots, \alpha_{n-1} ]^T \in \R^{n-1}$  and $\displaystyle v_h = \sum_{i=1}^{n-1} \alpha_i \varphi_i \in \M_h \subset H^1_0(0,1)$, \\ we have 
 \[
 |v_h|^2 =\frac{1}{h}  \alpha^T S\alpha \leq \frac{4}{h} \sum_{i=1}^{n-1} \alpha_i^2 \leq 12 \, h^{-2} 
 \|v_h\|^2,
 \]
which leads to the norm estimate
\begin{equation}\label{eq:H1L2Inf}
\frac{1}{2\, \sqrt{3}} h\, |v_h| \leq \|v_h\| \leq  \|v_h\|_{h,\infty}, \ \text{for all} \ v_h \in \M_h.  
\end{equation}
In particular, if $u$ is the solution of  the  problem  \eqref{eq:1d-model}, $I_h(u)$ is the linear interpolant of  $u$ on the uniform nodes $x_0, x_1,\cdots, x_n$,  and $\displaystyle u_h$ is the solution of a  bubble UPG discretization  \eqref{eq:1d-modelPG}, 
 then  we  can apply the estimate \eqref{eq:H1L2Inf}  for $v_h= I_h(u) - u_h$ to obtain 
\begin{equation}\label{eq:H1L2Inf2}
\frac{1}{2\, \sqrt{3}} h\, |I_h(u) - u_h| \leq \|I_h(u) - u_h\|_{L^2} \leq  \|I_h(u) - u_h\|_{h,\infty}.
\end{equation}
Next, we state the main result of this subsection.
\begin{theorem} \label{th:T2} For $f\in C^1([0, 1])$  let  $u$  be the solution of  the  problem  \eqref{eq:1d-model}.   Let  $\displaystyle u_h=\sum_{i=1}^{n-1} u_i \varphi_i$ be  the solution of  quadratic bubble UPG discretization  \eqref{eq:1d-modelPG} with  scaling parameter $\beta$ given by \eqref{eq:magicB}. Assume that for  a given $\varepsilon \ll 1$, the mesh size $h$ is chosen such that $\varepsilon \leq h^2$.  Then  
 \begin{equation}\label{eq:ConvB2H1}
|u-u_h| \leq |u-I_h(u)| +  2\sqrt{3}\, h\,   \left (6 \|f\|_\infty + \frac{3}{4} \|f'\|_\infty \right), 
 \end{equation}
 and 
 \begin{equation}\label{eq:ConvB2L2}
\|u-u_h\| \leq \|u-I_h(u)\| +  h^2  \left (6 \|f\|_\infty + \frac{3}{4} \|f'\|_\infty \right). 
 \end{equation}

 \end{theorem}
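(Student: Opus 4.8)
The plan is to exploit the splitting $u-u_h=(u-I_h(u))+(I_h(u)-u_h)$ together with the fact that $I_h(u)-u_h\in\M_h$ is piecewise linear, so that the inverse-type estimate \eqref{eq:H1L2Inf2} applies to it directly. First I would observe that, since $I_h(u)$ has nodal values $u(x_j)$ and $u_h$ has nodal values $u_j$, the discrete infinity norm of the difference is exactly
\[
\|I_h(u)-u_h\|_{h,\infty}=\max_{j=\overline{1,n-1}}\,\left|u(x_j)-u_j\right|.
\]
The hypothesis $\varepsilon\le h^2$ implies $e^{-h/\varepsilon}\le h$, so Theorem \ref{th:T} applies and bounds this quantity by $6\,\varepsilon\,\|f\|_\infty+\frac{3}{4}h^2\|f'\|_\infty$; using $\varepsilon\le h^2$ once more to absorb the first term, we get $\|I_h(u)-u_h\|_{h,\infty}\le h^2\bigl(6\|f\|_\infty+\frac{3}{4}\|f'\|_\infty\bigr)$.

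For the $L^2$ estimate \eqref{eq:ConvB2L2}, I would apply the triangle inequality and then the right-hand inequality of \eqref{eq:H1L2Inf2}, namely $\|I_h(u)-u_h\|\le\|I_h(u)-u_h\|_{h,\infty}$, to conclude
\[
\|u-u_h\|\le\|u-I_h(u)\|+\|I_h(u)-u_h\|_{h,\infty}\le\|u-I_h(u)\|+h^2\Bigl(6\|f\|_\infty+\tfrac{3}{4}\|f'\|_\infty\Bigr).
\]
For the $H^1$ estimate \eqref{eq:ConvB2H1}, I would again use the triangle inequality $|u-u_h|\le|u-I_h(u)|+|I_h(u)-u_h|$ and then the left-hand inequality of \eqref{eq:H1L2Inf2}, rearranged as $|I_h(u)-u_h|\le\frac{2\sqrt{3}}{h}\,\|I_h(u)-u_h\|_{h,\infty}$, which produces the factor $2\sqrt{3}\,h$ in front of the infinity-norm bound and yields \eqref{eq:ConvB2H1}.

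There is essentially no analytical obstacle here: the substantive work was already carried out in Theorem \ref{th:T} (the discrete-infinity estimate, via the Green-matrix identity and the summation-by-parts bound on $|(f,B^d_i)|$) and in the elementary spectral bound $\lambda_{\max}(S)\le 4$ underlying \eqref{eq:H1L2Inf}. The only points requiring a little care are the bookkeeping identity $\|I_h(u)-u_h\|_{h,\infty}=\max_j|u(x_j)-u_j|$ and the double use of the hypothesis $\varepsilon\le h^2$: once to guarantee $e^{-h/\varepsilon}\le h$ so that Theorem \ref{th:T} is applicable, and once to collapse $6\varepsilon\|f\|_\infty$ into $6h^2\|f\|_\infty$, giving the clean $O(h^2)$ (respectively $O(h)$) consistency contribution on top of the interpolation error.
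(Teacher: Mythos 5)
Your proposal is correct and follows essentially the same route as the paper: the triangle inequality on the splitting $u-u_h=(u-I_h(u))+(I_h(u)-u_h)$, the identification $\|I_h(u)-u_h\|_{h,\infty}=\max_j|u(x_j)-u_j|$, Theorem \ref{th:T} (applicable since $\varepsilon\le h^2$ gives $e^{-h/\varepsilon}\le h$), and the two inequalities in \eqref{eq:H1L2Inf2}. Your explicit absorption of $6\varepsilon\|f\|_\infty$ into $6h^2\|f\|_\infty$ via $\varepsilon\le h^2$ is a detail the paper leaves implicit, but the argument and the constants match exactly.
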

 \begin{proof} 
 First we note that  $\varepsilon \leq h^2$ implies that  $e^{-\frac{h}{\varepsilon}} \leq h$. Thus, the inequality  assumption of Theorem \ref{th:T} is satisfied.  For justifying  \eqref{eq:ConvB2H1}, we use the triangle inequality in the energy norm  for 
 \[
 u-u_h= (u-I_h(u)) + (I_h(u) -u_h),
 \]
 and note that 
 \[
 \|I_h(u) - u_h\|_{h,\infty}= \max_{j=\overline{1,n-1}}\  \left | u(x_j)-u_j \right |.
 \]
Now, the estimate  \eqref{eq:ConvB2H1} is a direct consequence of \eqref{eq:H1L2Inf2} and Theorem  \ref{th:T}. 
 Similar arguments can be used to justify \eqref{eq:ConvB2L2}. 
 \end{proof}
\begin{remark}
The estimates \eqref{eq:ConvB2H1} and  \eqref{eq:ConvB2L2} imply 
 $O(h)$ approximation  for the energy norm and $O(h^2)$ approximation  for the $L^2$ norm respectively, provided the 
interpolant $I_h(u)$ approximates the exact solution $u$ with the same corresponding order.
Due to the possible  presence of a  boundary layer for  the solution, standard  interpolant  approximation  happens only on subdomains away from the boundary layer. 

As an example of posible  large magnitude for  $|u-I_h(u)|$  on subdomains containing the boundary layer, we consider $f=1$ in  \eqref{eq:1d-model}  with  the exact solution  $u$ 
\[
{u(x)= x- \frac{e^{\frac{x}{\varepsilon}} -1}{e^{\frac{1}{\varepsilon}} -1}}. 
\]
Straightforward  calculations give 
\[
\begin{aligned}
{|u- I_h(u)|_{[0, 1]}^2} &=  \frac{1+e^{-1/\varepsilon}} {1-e^{-1/\varepsilon}} 
\left (\frac{1}{2 \varepsilon} - \frac{1}{h} \, \frac{1-e^{-h/\varepsilon}}{1+e^{-h/\varepsilon}} \right )\ \text{and}\\
|u- I_h(u)|_{[0, 1-h]}^2 &=  \frac{e^{-2h/\varepsilon}-e^{-2/\varepsilon}}{1-e^{-2/\varepsilon}} \, 
|u- I_h(u)|_{[0, 1]}^2.
\end{aligned}
\]
Thus, for $\varepsilon \ll h$, we have
\[
\begin{aligned}
|u- I_h(u)|_{[0, 1-h]} & \approx e^{-h/\varepsilon} |u- I_h(u)|_{[0, 1]} \approx 0, \ \text{and} \\
|u- I_h(u)|_{[1-h, 1]} & \approx |u- I_h(u)|_{[0, 1]} \approx \frac{1}{2\varepsilon} -\frac{1}{h}. 
\end{aligned}
\]
This calculations show that the energy error  $|u-I_h(u)|_{[0, 1]}$ 
is insignificant for the  interval $[0, 1-h]$, and it could be very large and  essentially attained  on the last sub-interval $[1-h, 1]$. Thus, in this case,  the interval  $[0, 1-h]$ qualifies  as an ``away from the boundary layer'' subdomain for the energy error $|u-u_h|$. Numerical tests show that $|u-u_h|_{[0, 1-h]} \leq O(h)$ for   $\varepsilon \leq h^2$. 
\end{remark} 

\section {Two dimensional   bubble UPG } \label{sec:2DB2} 

In this section we extend the bubble UPG approach to  the two dimensional  case of problem  \eqref{PDE_RD}.   Even though the ideas  presented in this section can be implemented  to the case of a  general bounded domain and   an arbitrary convection vector  $\textbf{b}=(b_1, b_2)^T \neq 0$, to simplify our presentation, we will assume that  $\Omega=(0,1)\times (0, 1) $  and  $\textbf{b}=[1,0]^T$. With these assumptions, the problem \eqref{PDE_RD} becomes: Given $f \in L^2(\Omega)$, find $u=u(x,y)$ such that 
 \begin{equation}\label{PDEx}
   \left\{
\begin{array}{rcl}
     -{ \varepsilon}\, \Delta u +  u_x & =\ f & \mbox{in} \ \ \ \Omega,\\
      u & =\ 0 & \mbox{on} \ \partial\Omega.\\ 
\end{array} 
\right.
 \end{equation}

In this section, we  consider a natural extension of the  bubble UPG method  described in  Section \ref{sec:FE} for discretizing \eqref{PDEx}. We start by dividing the $x$-interval $[0,1]$ into $n$  equal length subintervals using the nodes \\ $0=x_0<x_1<\cdots < x_n=1$. Similarly, we divide the  $y$-interval $[0,1]$ into $n$  equal length subintervals using the nodes $0=y_0<y_1<\cdots < y_n=1$. We  denote  $h=x_j - x_{j-1}=y_j - y_{j-1}=1/ n$.

\begin{figure}[!h]\label{fig:exmesh}
	\begin{center}
		\begin{tikzpicture}[scale=0.75]
						\draw[fill=black!20!white] (1,1)--(3,1)--(3,2)--(3,3)--(1,3)--cycle;
			\draw[](0,0)--(4,0);
			\draw[](4,4)--(4,0);
			\draw[](0,4)--(4,4);
			\draw[](0,0)--(0,4);
			\draw[](1,0)--(1,4);
			\draw[](2,0)--(2,4);
			\draw[](2,0)--(2,4);
			\draw[](0,2)--(4,2);
			\draw[](3,0)--(3,4);
			\draw[](0,1)--(4,1);
			\draw[](0,2)--(4,2);
			\draw[](0,3)--(4,3);
			\node[] at (2,2) {$\bullet$};
			\node[] at (2.45,2.2) {$z_{i,j}$};
			
		\end{tikzpicture}
	\end{center}
\end{figure}

Using  the notation  of Section \ref{sec:FE},  we define  the trial space $\M_h$ by 
\[
\M_h= \text{span}\{\varphi_i(x)\,\varphi_j(y) \} \ \text{for all} \ z_{i,j}=(x_i,y_j) \in  \Omega,  
\]
and  the test space $V_h$  by 
\[
V_h= \text{span}\{ g_i(x) \,\varphi_j(y) \} \ \text{for all} \ z_{i,j}=(x_i,y_j) \in  \Omega.  
\]
As defined in Section \ref{sec:FE}, $g_i(x)= \varphi_i(x)+ B_i(x)-B_{i+1}(x)$,  
where $B:[0,h] \to\R$ is a bubble function satisfying  \eqref{Bbounds}, \eqref{b1}, and  $B_i(x)=B(x-x_{i-1})$ on $[x_{i-1}, x_i]$ and is extended by zero on the entire $[0, 1]$. 

A general  Upwinding Petrov Galerkin discretization with  bubble functions in the $x$-direction for solving 
\eqref{PDEx} is: Find $u_h \in \M_h$ such that 
\begin{equation}\label{eq:UPGxV}
b(v_h, u_h):= \varepsilon\, (\nabla u_h, \nabla v_h)+  \left (\frac{\partial u_h}{\partial x},v_h\right ) =(f,v_h) \ \Forall  v_h \in V_h. 
\end{equation}
If the average value $b$ of the generating bubble $B$ is not too large, then  the existence and the uniqueness of the solution of \eqref{eq:UPGxV} can be proved by investigating the  corresponding linear system as shown in the next section for the quadratic bubble UPG.  
\subsection{2D  quadratic  bubble UPG } \label{subsec:2DB2x} 
In this section,  we focus on the  bubble UPG  method  with quadratic bubble functions in the $x$-direction. In the general UPG discretization, we choose 
\[
 B (x)= B^q(x)= \frac{4\, \beta}{h^2} x(h-x), \ \text{with} \  \beta =\frac32 \left (\frac{1}{2 g_0}- \frac{\varepsilon}{h}\right ).
\]
For describing the matrix of the linear system associated with quadratic bubble discretization,  
we will need the matrices introduced in Section \ref{sec:FE}:  \(S,M^e_{fe}  \in\R^{n-1}\times \R^{n-1}\),  where 
\[
S=tridiag(-1, 2, -1) \ \text{and} \   M^e_{fe}= tridiag \left ( -\frac{1+t_e}{2t_e} ,\  \frac{1}{t_e},\   -\frac{1-t_e}{2t_e} \right ).
 \]
 According to   Remark \ref{re:magicB}, we have  $M^q_{fe} =M^e_{fe}$. To simplify the notation, we will define  $C^e:=M^e_{fe}=M^q_{fe}$. 
We will also need   the matrices that correspond to the one dimensional mass matrices $M$ and $M^q$ with entries 
\[
M_{ij}=(\varphi_j,\varphi_i)\  \text{and} \  M^q_{ij}= (\varphi_j, g^q_i), i,j=1,2,\cdots,n-1.
\]
Simple calculations show that
\[
M=\frac{h}{6} \, tridiag(1, 4, 1),   \ \text{and} \  M^q = M+\beta\, \frac{ h}{3} \, tridiag(-1, 0,1).
 \]
By expanding the solution $u_h$ of   \eqref{eq:UPGxV}  as 
\[
u_h=\sum_{k=1}^{n-1}\sum_{l=1}^{n-1} u_{lk}\,  \varphi_l(x)\,  \varphi_k(y),
\]
and by  taking  $v_h= g^q_i(x) \,\varphi_j(y)$ in \eqref{PDEx}, we get the linear system:  
\begin{equation}\label{eq:sys2dq}
A^q\, U^q = F^q, \ \text{where} 
\end{equation}
 \begin{equation}\label{eq:matrix2dq}
A^q = M\, \otimes C^e + \frac{\varepsilon}{h}\, S \otimes M^q, \ \text{and}
\end{equation}
 \begin{equation}\label{eq:DualV2dq}
\begin{aligned}
U^q&=[u_{11}, \cdots, u_{n-1,1}, u_{12},u_{22}, \cdots,u_{n-1,2},\cdots, u_{n-1,n-1}]^T,\\
F^q&=[(f,g^q_1\,\varphi_1),\cdots, (f,g^q_{n-1}\, \varphi_1), (f,g^q_1\, \varphi_2), \cdots, (f,g^q_{n-1}
\, \varphi_{n-1})  ]^T.
\end{aligned}
\end{equation}
The equations \eqref{eq:sys2dq}-\eqref{eq:DualV2dq}  lead to a fast implementation for finding $u_h$.

\begin{remark}\label{rem:AqA0}
 Using that  $t_e:=\tanh\left (\frac{h}{2\varepsilon}\right)$ and $\beta$ is given by \eqref{eq:magicB}, we have 
 \[
  \lim_{\frac{\varepsilon}{h}\to 0}   \,       t_e = 1, \ \text {and}\   \lim_{\frac{\varepsilon}{h}\to 0} \, \beta=  \frac{3}{4},  
  \]
  and the limits are exponentially fast.   Consequently,  the following  limits hold exponentially fast as well:
 \[
 C^e \to C^0:=tridiag(-1, 1,0),  \ M^q\to M^{q_0}:=\frac{h}{12}tridiag(-1, 8,5).
 \]
 Thus, for  $\varepsilon  \ll h$, we note that the matrix $A^q$ is ``exponentially close'' to 
 \begin{equation}\label{eq:AqA0}
 \begin{aligned}
 h\,  &\left[(1/6) \, tridiag(1, 4, 1)  \otimes tridiag(-1, 1,0) \right]  \\
 +  \varepsilon\, & \left[ \ tridiag(-1, 2,-1) \  \otimes \ tridiag(-1, 8,5)  \right].  
 \end{aligned}
\end{equation}
In addition, for $\varepsilon  \ll h$,  we also have that $A^q$ is very close to 
\[
 h \, \left[(1/6) \, tridiag(1, 4, 1)  \otimes tridiag(-1, 1,0) \right],  
\]
which is an invertible matrix because  both matrices, $tridiag(1, 4, 1)$  and \\ $ tridiag(-1, 1,0)$, are invertible. Thus, $A^q$ is an invertible matrix  and the second term in formula  \eqref{eq:matrix2dq} for $A^q$  is less significant in approximating  $u_h$. 

\end{remark} 
While a precise convergence analysis of the method as done in the one dimensional case might be difficult, Remark \ref{rem:AqA0} suggests that  for $\frac{\varepsilon}{h}\to 0$, the contribution of the  
 $\displaystyle \varepsilon\, \left (\frac{\partial u_h}{\partial y}, \frac{\partial v_h}{\partial y}\right )$ in the variational formulation \eqref{eq:UPGxV},  is not essential and, at least for $\varepsilon  \ll h$, the discrete infinity error should behave as in the one dimensional case. We further notice that norm estimates similar to \eqref{eq:H1L2Inf} hold true with different constants for the two dimensional case. Thus, the behavior of the $L^2$ and the $H^1$ errors for the 2D case should mirror the 1D case. This was  observed indeed in our numerical tests. We present  two numerical examples in the next section. 

\section{Numerical Results}  \label{sec:2DB2NR} 
In this section, we  present  numerical conclusions  for discretizing \eqref{PDEx} using the  quadratic bubble UPG  approximation of Section \ref{subsec:2DB2x}. 
\vspace{0.1in}

{\bf Example 1:  Elliptic boundary layer near $x=1$.}  For this example, we choose the right hand side $f$ such that the exact solution is 
\[
u(x,y) = v(x) \, w(y), \ \text{with} \ w(y)=\sin(\pi y),  \text{and} 
\]
\[
v(x)= \frac{1}{1-\varepsilon} \left (  e^x -e - \frac{e-1}{1-e^{-1/\varepsilon}} \left ( e^{\frac{x-1}{\varepsilon}} -1  \right ) \right). 
\]
We note that $v$ is the unique solution of 
\[
\begin{cases} 
 -\varepsilon \, v''(x) + v'(x) =e^x, \\ v(0)=v(1)=0,
 \end{cases}
\]
and has  a boundary layer near $x=1$. 

We approximated the exact solution   for various  values of $\varepsilon \leq 10^{-6}$, and  $h=\frac{1}{2^n}, n=5,6,7,8,9,10$.  Note  that in all these cases,  $\varepsilon \leq h^2$. For all values  of $\varepsilon$ and the specified values of $h$, we verified that 
\begin{equation}\label{eq:orderInterpInf}
\|u_h -I_h(u)\|_{h,\infty} = \mathcal{O}(h^2). 
\end{equation}
We also measured  the $L^2$ and the $H^1$ errors and observed that
\[
|u-u_h| = \mathcal{O}(h) , \text{and} \  \|u-u_h\|_{L^2}  =\mathcal{O}(h^2)
\]
on the subdomain $(0, 1-\delta)\times (0, 1) $ for $\delta=0.01-$ away form the boundary layer.
By  decreasing $\delta$, for example, to  $\delta=0.001$, we still observe that \eqref{eq:orderInterpInf} holds, 
  but the  $H^1$ and the  $L^2$ errors  increase  and  {\it  their orders of convergence decrease} when compared with the  $\delta=0.01$ case. However,  for $\delta=0.001$, on  $(0, 1-\delta)\times (0, 1) $, we have that
\[
{ |u-u_h| \approx  | I_h(u)-u_h| } , \text{and} \ {\|u-u_h\|_{L^2}  \approx \|I_h(u)-u_h\|_{L^2} }.
\]
\vspace{0.05in}

In conclusion, the loss in convergence order  when the errors are measured on the whole domain, is due to suboptimal  approximation  of  the interpolant near the boundary layer of the exact solution, and is not due to a weakness of the quadratic bubble UPG discretization. 
 
\vspace{0.1in}

{\bf Example 2: Elliptic boundary layer near $x=1$ and parabolic  near boundary layer near 
$y=0$ and $y=1$.} We choose the right hand side $f$ such that the solution 
$u(x,y) = v(x) \, w(y)$ with  $v$ as  defined in Example 1, and $\displaystyle w(y)=y(1-y)+e^{-\frac{y}{\sqrt{\varepsilon}}} + e^{-\frac{1-y}{\sqrt{\varepsilon}}} $. 
Even though the exact solution exhibits both elliptic and parabolic boundary layers, we have that for 
 $\varepsilon \leq  h^2$, the estimate \eqref {eq:orderInterpInf} still holds, away from the parabolic boundary layers. In addition, the  $H^1$ and $L^2$  errors match the order of the interpolant approximation, away from both types of  boundary layers.  
 For all cases when  $h^2 \leq  \varepsilon$, the numerical tests  showed  that the numerical solution is ``identical'' in the {\it eye ball measure} with the exact solution. On the other hand,  for values of  $\varepsilon$  and $h$, such tat $\varepsilon < h^2$, the discrete solution exhibits no-physical oscillation as the plot of the numerical  solution drops along  the parabolic  boundary layers, near  $y=0$ and $y=1$.  We further noted  that  for  $ h^2 \approx \varepsilon$,  the discrete solution is free of no-physical oscillation and approximates well the exact solution in both   $H^1$ and $L^2$ norms, {\it away from all boundary layers}. 


\section{Conclusion} \label{sec:Conc} 
We analyzed  a bubble upwinding Petrov-Galerkin discretization method for the convection diffusion problem.  For  the one dimensional case with a special scalling parameter for the quadratic bubble UPG, we proved an optimal convergence estimate in the discrete infinity norm. As a consequence, we obtain optimal error estimates in the standard  $H^1$ and $L^2$  norms away from the boundary layers. The approach was extended to a special two dimensional case. 
The main advantage  of the proposed  bubble upwinding approach is that, by using uniform meshes,  one can recover optimal or near optimal error estimates  for the discrete solutions in standard norms. Due to optimal approximation in the discrete infinity norm, the discrete solutions  are free of  non-physical oscillations.  
The loss in convergence order for the  $H^1$ and $L^2$ errors, computed on the entire domain, is due to  suboptimal  approximation properties of  the interpolant of the exact solution on uniform  meshes. This convergence  aspect  is not a weakness of the proposed  discretization, and can lead to building more efficient  bubble UPG methods on non-uniform meshes designed to optimize the interpolant approximation. 

New designed discretizations of multi-dimensional convection dominated problems, could take
advantage of the  efficient discretizations of the 1D  and the 2D problems presented here. 
The main take away of our results is that, for building robust discretizations of convection dominated problems, one efficient strategy  is to tensor an efficient  bubble UPG discretization along each  stream line with standard discretizations on the “orthogonal” direction(s).
  

 \bibliographystyle{plain} 
\def\cprime{$'$} \def\ocirc#1{\ifmmode\setbox0=\hbox{$#1$}\dimen0=\ht0
  \advance\dimen0 by1pt\rlap{\hbox to\wd0{\hss\raise\dimen0
  \hbox{\hskip.2em$\scriptscriptstyle\circ$}\hss}}#1\else {\accent"17 #1}\fi}
  \def\cprime{$'$} \def\ocirc#1{\ifmmode\setbox0=\hbox{$#1$}\dimen0=\ht0
  \advance\dimen0 by1pt\rlap{\hbox to\wd0{\hss\raise\dimen0
  \hbox{\hskip.2em$\scriptscriptstyle\circ$}\hss}}#1\else {\accent"17 #1}\fi}

 \end{document}